\theoremstyle{plain}
\newtheorem{theorem}{Theorem}
\newtheorem{lemma}{Lemma}
\newtheorem{proposition}{Proposition}
\newtheorem{corollary}{Corollary}
\theoremstyle{definition}
\theoremstyle{remark}
\theoremstyle{definition}
\title{On Linkedness of the Cartesian Product of Graphs}
\author{G\'abor M\'esz\'aros\footnote{Research was supported by the Balassi Institute, the Fulbright Commission, and the Rosztoczy Foundation.}}
\begin{document}
\maketitle
\begin{abstract}
We study linkedness of the Cartesian product of graphs and prove that the product of an $a$-linked and a $b$-linked graphs is $(a+b-1)$-linked if the graphs are sufficiently large. Further bounds in terms of connectivity are shown. We determine linkedness of products of paths and products of cycles.
\end{abstract}
\section*{Introduction}

Throughout this paper we use the notation of \cite{modern}. For the sake of completeness we recall definitions of the mainly used concepts. The {\it connectivity} of a simple graph $G = (V(G), E(G))$ (denoted by $\kappa(G)$) is the smallest number of vertices whose removal from G results in a disconnected graph or a graph of one vertex. The {\it Cartesian product} of graphs $G$ and $H$ is the graph $G\square H$ with vertices $V(G\square H)=V(G)\times V(H)$, and $(x,u)(y,v)$ is an edge  if $x=y$ and $uv\in E(H)$ or $xy\in E(G)$ and $u=v$. Product of graphs $G_1,\dots,G_t$ for $t\geq 3$ is defined recursively. Note that the Cartesian product is an associative operation. The graphs $G_1,\dots,G_t$ are called factors of $G_1\square\dots\square G_t$. The Cartesian product is a well studied graph product and it gave rise to important classes of graphs; for example, the $n$-dimensional grid can be considered as the Cartesian product of lower dimensional grids. Hypercubes are well known members of this family with similar recursive structure:  the Cartesian product of an $m$-dimensional hypercube and 
an $n$-dimensional hypercube is an $(m+n)$-dimensional one.  

The study of graph products leads deep structural problems such as invariance and inheritance of graph parameters: connections between parameters of products and their factors have been extensively studied. Note that among the several graph products (see \cite{product}) the Cartesian product is also known as {\it direct sum} referring to the fact that many of the classical graph parameters inherit additively. In case of {\it minimum, maximum} and {\it average degree} it can be showed easily that  $\delta(G\square H)=\delta(G)+\delta(H)$, $\Delta(G\square H)=\Delta(G)+\Delta(H)$ and $\underline{d}(G\square H)=\underline{d}(G)+\underline{d}(H)$. We present some further results with linear bounds. 
Chiue and Shieh \cite{conn ineq} proved that the Cartesian product of a $k$-connected and an $l$-connected graph is $(k+l)$-connected. Later on, Spacapan  \cite{conn} determined the connectivity number of $G\square H$, namely $\kappa(G\square H)=\min\big(\delta(G)+\delta(H), \kappa(G)\cdot |V(H)|,\kappa(H)\cdot |V(G)|\big)$.
Gy\H ori and Plummer \cite{Gyori Plummer} proved  that the Cartesian product of a $k$-extendable and an $l$-extendable graph is $(k+l+1)$-extendable (a graph $G$ is $k$-extendable if $G$ is connected, has a perfect matching and any matching of $k$ edges in $G$ can be extended to a perfect matching).
%The result of To\v{s}i\'c \cite{cop} reveals another intriguing and peculiar example of additive inheritance. He proved  that $c(G\square H)\leq c(G)+c(H)$ where $c(G)$ denotes the {\it cop number} of $G$. Let us mention some nonadditive inheritances such as $\chi(G\square H)=\max\big(\chi(G), \chi(H)\big)$,  $\omega(G\square H)=\max\big(\omega(G), \omega(H)\big)$ or $\alpha(G\square H)\geq \alpha(G)\alpha(H)$. For the {\it domination number} $\gamma(G)$ it has been conjectured by Vizing \cite{vizing} that $\gamma(G\square H)\geq \gamma(G)\gamma(H)$. Partial results about this conjecture have been provedfor several graph classes. For latest results concerning Vizings conjecture we refer to \cite{vizingsurvey}.    
 
In this paper we study linkedness of the Cartesian product of graphs. Menger's theorem (see \cite{modern}) implies that a graph is $k$-connected if and only if for every (not necessarily disjoint) $k$-tuples $S=\{s_1,\dots  , s_k\}$ and $T=\{t_1,\dots, t_k\}$ there exist disjoint paths $P_1,\dots, P_k$ joining every $s_i$ to $t_{\pi(i)}$ for some $\pi\in S_k$. Menger's theorem provides no control on the actual pairing of $S$ and $T$ via paths. A graph $G$ is $k$-{\it linked} if, for every ordered set of $2k$ vertices $S=(s_1,\dots, s_k)$ and $T=(t_1,\dots, t_k)$ there exist internally disjoint paths $P_1,\dots, P_k$ such that each $P_i$ is an $s_i, t_i$-path. It is a well-known but somewhat technical result that a graph $G$ is $k$-linked if and only if the above condition holds for every choice of disjoint $S$ and $T$ sets. We use the proposition without proof and always assume that $S$ and $T$ are disjoint for the sake of simplicity. We use the notation $link(G)$ for the linkedness-number of a graph $G$, that is, the largest positive integer $k$ for which $G$ is $k$-linked.

Linkedness is a natural strenghtening of connectivity. It is easy to see that $k$-linked graphs are  $(2k-1)$-connected. Certainly, placing vertices $s_1,t_1,\dots,s_{k-1}, t_{k-1}$ in a graph $G$ to a cut $D$ of size $(2k-2)$ makes impossible to join $s_k$ to $t_k$ if they are located in different components of $G-D$. It has been also known for some time that sufficient connectivity would imply linkedness. Bollob\'as and Thomason \cite{Bollobas} gave the first linear upper bound proving that $22k$-connected graphs are $k$-linked. This bound has been improved to $10k$ by Thomas and Wollan \cite{10k} and it is also very likely that the connectivity needed to imply k-linkedness is significantly less than $10k$. When girth conditions are placed on the graph, then almost sharp results between connectivity and linkage can be proven. Mader \cite{Mader} proved that $2k$ connected graphs with sufficiently large girth are $k$-linked. The condition on the girth has been weakened by Kawarabayshi \cite{girth}. Note that neither Mader's nor Kawarabayshi's result can be applied for the Cartesian Product of graphs, as of the Cartesian Product of two nonempty graphs is upperly bounded by four.

In this paper, we prove that the Cartesian product of an $a$-linked graph and a $b$-linked graph is $(a+b-1)$ linked if the graphs are sufficiently large.
\begin{theorem}\label{a+b-1}
If $G$ is an $a$-linked graph with $|V(G)|\geq 8a$ and $H$ is a $b$-linked graph with $|V(H)|\geq 8b$ then $ G\square H$ is $(a+b-1)$-linked.
\end{theorem}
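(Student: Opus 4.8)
The plan is to realise the requested $m:=a+b-1$ linkages by a ``staircase'' routing adapted to the product structure. Write $s_i=(x_i,u_i)$ and $t_i=(y_i,v_i)$. Each path $P_i$ will be a concatenation of a \emph{horizontal} piece inside the row $\{x_i\}\times V(H)\cong H$, a \emph{vertical} piece inside a column $V(G)\times\{w_i\}\cong G$ for a ``transfer'' coordinate $w_i\in V(H)$, and a second horizontal piece inside $\{y_i\}\times V(H)$ (a piece degenerates when $x_i=y_i$ or $w_i\in\{u_i,v_i\}$). The merit of this shape is that disjointness becomes almost bookkeeping: vertical pieces in distinct columns are automatically disjoint, horizontal pieces in distinct rows are automatically disjoint, and a vertical piece in column $w$ meets a horizontal piece at most in the single vertex where the latter crosses column $w$. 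So the task reduces to controlling (i) vertical pieces that share a column, (ii) horizontal pieces that share a row, and (iii) finitely many such crossings.

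In the favourable case --- terminals spread out over many rows and columns --- I would fix one \emph{central column} $V(G)\times\{w_0\}$ and one \emph{central row} $\{x_0\}\times V(H)$, split $[m]=I\sqcup J$, give every $i\in I$ the transfer coordinate $w_0$, and route every $j\in J$ by the mirror recipe (vertical into $\{x_0\}\times V(H)$, horizontal inside it from $(x_0,u_j)$ to $(x_0,v_j)$, vertical out). The vertical pieces of the $I$-paths then all lie in $V(G)\times\{w_0\}\cong G$, so provided $|I|\le a$ and their $2|I|$ endpoints in that copy are pairwise distinct the $a$-linkedness of $G$ realises them disjointly; dually the horizontal pieces of the $J$-paths live in one copy of $H$. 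This is precisely where $a+b-1$ appears in place of $a+b$: the only possible crossing between an $I$-vertical (in column $w_0$) and a $J$-horizontal (in row $x_0$) is at $(x_0,w_0)$, so I demand that the $J$-horizontals avoid $w_0$, i.e.\ ask for a $|J|$-linkage in $H-w_0$; and $H-w_0$ is only $(b-1)$-linked (given $b-1$ pairs, adjoin a decoy pair $(w_0,w')$ with $w'$ a neighbour of $w_0$ outside the at most $2(b-1)$ terminals, which exists since $\delta(H)\ge\kappa(H)\ge 2b-1$, invoke $b$-linkedness, and delete the decoy path; the remaining paths are vertex-disjoint from it and do not end at $w_0$, hence avoid $w_0$). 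Thus $|I|\le a$ and $|J|\le b-1$ suffice, and $a+(b-1)=m$.

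Several preparatory steps are needed. The terminals need not be in general position, but only the $I$-terminals need $2|I|\le 2a$ distinct $G$-coordinates and the $J$-terminals $2|J|\le 2(b-1)$ distinct $H$-coordinates; whenever two clash I would slide the offending endpoint along one fresh edge to a brand-new vertex and prepend that edge to the path, and the hypotheses $|V(G)|\ge 8a$, $|V(H)|\ge 8b$ are there to give room for all such slides and still to choose $w_0,x_0$ and the auxiliary vertices away from the at most $2m$ occupied coordinates. One must also eliminate the remaining crossings --- an $I$-horizontal meeting a $J$-vertical, or an $I$-horizontal touching the central column off its endpoint --- but each of these only asks a horizontal segment in some copy of $H$ to avoid a bounded set of columns, which the $(2b-1)$-connectivity of $H$ (and $(2a-1)$-connectivity of $G$), together with the spare vertices, makes possible.

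The genuine obstacle, and what I expect to consume most of the proof, is the concentrated regime: when many more than $b$ terminals share a $G$-coordinate (one copy of $H$ is overloaded), or dually in $G$. Then no single central copy of a factor can absorb the horizontal (resp.\ vertical) pieces, and some paths must make real excursions into other rows and columns; the difficulty is to choose all the excursions simultaneously internally disjoint, disjoint from the main routing, and dodging every terminal --- a delicate packing, since a path that abandons a crowded row leaves its endpoints behind as obstacles for the ones that remain. I would handle this by using that $G$ is not only $a$-linked but $(2a-1)$-connected (dually $H$ is $(2b-1)$-connected), so that next to any crowded copy there are always several completely unused parallel copies through which to reroute, with $|V(G)|\ge 8a$, $|V(H)|\ge 8b$ ensuring the supply of free rows and columns outstrips the demand. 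Combined with the $|I|\le a$, $|J|\le b-1$ count from the spread-out case, this is what makes the final bound exactly $a+b-1$.
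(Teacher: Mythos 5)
Your spread-out case is a plausible (and genuinely different) routing scheme: you split the $a+b-1$ pairs into $a$ pairs linked inside one central copy of $G$ and $b-1$ pairs linked inside one central copy of $H$ minus a vertex (your decoy-pair argument that $H-w_0$ is $(b-1)$-linked is fine). Note, incidentally, that the paper does something structurally different: it routes \emph{both} endpoints of every pair into one of two distinguished copies of $G$ (each receiving at most $a$ pairs, since $a\geq b$), so it never uses $b$-linkedness of $H$ at all, only its $(2b-1)$-connectivity; this is why the paper can state and prove the stronger Theorem~\ref{strong}.

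The genuine gap is the concentrated regime, which you yourself flag as ``what I expect to consume most of the proof'' and then resolve only by intention (``I would handle this by\dots reroute through unused parallel copies''). This is not a peripheral case: when more than $2a-1$ terminals, or even just more than a handful with coinciding coordinates, sit in one copy of $G$ or of $H$, your one-edge ``slide'' cannot produce the $2a$ distinct $G$-coordinates for the $I$-pairs (a vertex has only $\geq 2a-1$ neighbours, and the neighbouring rows may themselves be occupied), the assignment of pairs to $I$ versus $J$ becomes a constrained combinatorial problem you never address, and the $J$-verticals and $I$-horizontals may be forced to dodge up to $2a$ or $2b$ obstacles inside graphs that are only $(2a-1)$- resp.\ $(2b-1)$-connected, so the single-path Menger arguments you invoke no longer apply. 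Handling exactly this congestion is the substance of the paper's proof: a crowded layer (more than $2a-1$ terminals in a copy of $G$) is emptied by joining one matching pair through an adjacent terminal-free layer and inducting on $G\square(H-x-y)$, and the non-crowded but still clustered layers are processed in successive rounds of a truncated Menger-type shift (Lemma~\ref{classic}) whose obstacle count is kept at most $2a-1$ by construction. Without a worked-out substitute for this machinery, your argument proves the theorem only under an implicit ``terminals in general position'' assumption, not in general.
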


Remark that the bound in Theorem \ref{a+b-1} is sharp. Let $n,k\in \mathbb{Z}^+$, $n\geq (2k-1)$ and construct a graph $G$ as follows: take the complete graph $K_n$ on $n$ vertices and an additional vertex that is adjacent to $(2k-1)$ vertices in $K_n$. Easy to see that $G$ is $k$-linked, $(2k-1)$-connected, while $G\square G$ is $(4k-2)$-connected, hence it cannot be $2k$-linked as $2k$-linked graphs are $(4k-1)$-connected. As $n$ does not depend on the choice of $k$ (only $n\geq 2k$ is required) it provides and infinite family of products where equality holds. Later on we prove that higher connectivity of $G$ (with all other settings unchanged) yields better lower bound on the linkedness of the product graph. 

It follows from Theorem \ref{a+b-1} that the product $G\square H$ of a $k$-linked graph $G$ and graph $H$ is also $k$-linked if $H$ is connected, while disconnected $H$ makes $G\square H$ also disconnected. In the second part of the paper we find sufficient conditions for a graph $H$ such that the product $G\square H$ of $H$ and a (sufficiently large) $k$-linked graph $G$ is $(k+1)$-linked. Using that theorem, in the last section we determine linkedness of products of paths and products of cycles. 
\begin{theorem}\label{k->k+1}
If $G$ is a $k$-linked graph with $k\geq 2$ and $|G|\geq \max(9, 4k)$ and $H$ is a 2-connected graph then $G\square H$ is $(k+1)$-linked.
\end{theorem}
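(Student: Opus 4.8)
The plan is to designate one of the $k+1$ pairs as ``special'', to link the other $k$ pairs inside one copy of $G$ using the $k$-linkedness of $G$, and to route the special pair through a previously unused copy of $G$ sitting over a single vertex of $G$, using the $2$-connectedness of $H$ to steer that route clear of the crowded copy.

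Write the (disjoint) terminals as $s_i=(a_i,b_i)$ and $t_i=(c_i,d_i)$ with $a_i,c_i\in V(G)$, $b_i,d_i\in V(H)$; for $u\in V(H)$ let $G_u=G\times\{u\}$ be a $G$-layer and for $x\in V(G)$ let $H^x=\{x\}\times H$ be the $H$-fiber over $x$. First fix a hub level $u_0\in V(H)\setminus\{b_{k+1},d_{k+1}\}$, which exists since $H$ is $2$-connected and hence has at least three vertices, and then, using $|V(G)|\geq\max(9,4k)$, reserve a stock of fresh vertices of $G$ avoiding every $a_i$ and $c_i$, one of them a vertex $x^\ast$ dedicated to the special pair. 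The single property of $H$ used repeatedly is the trivial consequence of $2$-connectedness that $H-r$ is connected for every vertex $r$, so that any two vertices other than $r$ are joined by an $H$-path missing $r$.

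For the $k$ ordinary pairs: whenever several terminals among $s_1,t_1,\dots,s_k,t_k$ share a $G$-coordinate, slide all but one of each such group, along paths inside the terminal's own $G$-layer, onto fresh $G$-vertices, so that the $k$ pairs come to carry $2k$ pairwise distinct $G$-coordinates; then climb each (possibly slid) terminal up its $H$-fiber to the hub layer $G_{u_0}$, choosing the in-fiber paths to stay, wherever $2$-connectedness permits, out of the layers $G_{b_{k+1}}$ and $G_{d_{k+1}}$ used by the special pair; finally link the $2k$ arrival points inside $G_{u_0}$ by internally disjoint paths using the $k$-linkedness of $G$. For the special pair, run from $s_{k+1}$ inside $G_{b_{k+1}}$ to $(x^\ast,b_{k+1})$, climb $H^{x^\ast}$ along an $H$-path from $b_{k+1}$ to $d_{k+1}$ that avoids $u_0$, and descend inside $G_{d_{k+1}}$ to $t_{k+1}$: the middle portion misses $G_{u_0}$, hence the whole ordinary $k$-linkage; the fiber $H^{x^\ast}$ is fresh, hence disjoint from the ordinary fibers; and the two end portions lie in $G_{b_{k+1}}$ and $G_{d_{k+1}}$, whose only points touched by the ordinary construction are a controlled set of terminals, slide-paths, and fiber-starts that the end portions can be routed to dodge. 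Concatenating all pieces gives the required $(k+1)$-linkage.

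The main obstacle is the disjointness bookkeeping glossed over above, in two interacting forms. First, resolving collisions among the $G$-coordinates of the ordinary terminals by \emph{disjoint} slide-paths is not a direct application of $k$-linkedness, since up to roughly $2k$ slides may be needed and a colliding terminal may have no free neighbour; one instead routes into freely chosen fresh endpoints and leans on $\kappa(G)\geq 2k-1$ together with $|V(G)|$ being large. Second, in the worst case many ordinary terminals lie in the two layers $G_{b_{k+1}}$ and $G_{d_{k+1}}$, and then the slide-paths of the ordinary pairs and the two end portions of the special path must all be packed into a single copy of $G$ without overlap; controlling this is precisely what forces $|V(G)|\geq\max(9,4k)$, the constant $9$ taking care of the small case $k=2$.
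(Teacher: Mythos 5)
Your high-level plan (one hub layer $G_{u_0}$ for the $k$ ordinary pairs, a dedicated fresh fiber $H^{x^\ast}$ for the special pair) is genuinely different from the paper's proof, which distributes the $k+1$ pairs between \emph{two} receiving $G$-layers after a ``global horizontal shift'' and ``global vertical shift'', and whose real content is a case analysis on how the terminals are spread over the $G$-layers. Unfortunately, the part you defer as ``disjointness bookkeeping'' contains the actual crux, and your stated fallback --- routing into freely chosen fresh endpoints ``leaning on $\kappa(G)\geq 2k-1$ together with $|V(G)|$ being large'' --- does not close it. A $k$-linked graph can be exactly $(2k-1)$-connected (the paper's own sharpness example: $K_n$ plus a vertex of degree $2k-1$), so it may contain a set $S$ of $2k$ or $2k+1$ vertices with $|\Gamma_G(S)|=2k-1$. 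Now place all $2k$ ordinary terminals and $s_{k+1}$ at level $b_{k+1}$ with $G$-coordinates inside such a pocket: your construction then demands up to $2k+1$ pairwise disjoint paths inside the single copy $G_{b_{k+1}}$ (the slide paths to fresh coordinates plus the special end portion to $(x^\ast,b_{k+1})$), all of which must leave $S$ through a cut of size $2k-1$ --- impossible, and no lower bound on $|V(G)|$ helps, since the obstruction is local. A milder instance of the same problem: if $a_{k+1}$ has degree $2k-1$ and all its neighbours are ordinary terminals of that layer (which are exactly the starting points of the fiber climbs and hence must be avoided), your special end portion cannot even leave $s_{k+1}$ inside its layer.

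These congested configurations are precisely what the paper's proof treats by separate cases: when a layer holds $2k$ or more terminals it contains a matching pair by pigeonhole, and that pair is joined \emph{inside} the crowded layer through an adjacent terminal-free layer (switching to another pair if the chosen one blocks the remaining vertical moves), after which one shifts the rest and inducts; the balanced case $s_1=s_2=k+1$ with no pair inside a layer gets its own argument. Nothing in your plan substitutes for these manoeuvres, and your uniform ``slide, climb to the hub, link there'' strategy cannot be pushed through them, so the proposal has a genuine gap even though it is fine (and arguably cleaner than the paper's shifts) in the spread-out configurations where every layer holds few terminals. Secondary, but also glossed over: Menger gives no control on the length of the slide paths, so the freshness of $x^\ast$ and of the slid coordinates, and the promise that ordinary fiber climbs meet $G_{b_{k+1}}$, $G_{d_{k+1}}$ only in a bounded set of dodgeable points, require a truncation and round-by-round bookkeeping of the kind the paper sets up explicitly; and when $H-b_{k+1}-d_{k+1}$ disconnects, the climbs forced through those levels add up to $2k$ further obstacles to an avoidance budget that $(2k-1)$-connectivity cannot always cover in the crowded cases.
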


Before the proofs we fix further terminologies and notation. 
A $G$-{\it layer} $G_x$ ($x\in V(H)$) of the Cartesian product $G\square H$ is the subgraph induced by the set of vertices $\{(u,x): u\in V(G)\}$. An $H$-layer is defined analogously. We call edges of $G\square H$ lying in $G$-layers {\it vertical} while edges lying in $H$-layers are called {\it horizontal}. Unless misleading we also use the notation $G_z=G_x$ and $H_z=H_y$ for layers corresponding to $z=(x,y)\in G\square H$. The {\it projection} of vertex $(u,v)$ to a horizontal layer $G_x$ or a vertical layer $H_y$ is $(u,x)$ and $(y,v)$, respectively.
The set of neighbours of a vertex $x$ in a graph is denoted by $\Gamma_G(x)$.  
 For a graph $G$ and for a vertex $x\in V(G)$ or a 
subgraph $H \subset G$ we use the notation $G-x$ and $G-H$ for $G\backslash\{x\}$ and $G\backslash H$, respectively. The size of a set $H$ is denoted by $|H|$.
The labelled vertices $S=(s_1,\dots, s_k)$ and $T=(t_1,\dots, t_k)$ to be linked are sometimes called {\it terminals}, the sets $\{u_i, v_i\}$ are {\it pairs} or {\it matching terminals}. Finding a path for a pair is often called {\it joining} the pair.

\section*{Proof of Theorem \ref{a+b-1}}
Recall a straightforward corollary of Menger's Theorem:
\begin{lemma}\label{classic}
If $G$ is $k$-connected, $m,n\in \mathbb{Z}^+$, $m+n\leq k$, then for every disjoint tuples $D=\{d_1,\dots  , d_m\}$, $S=\{s_1,\dots  , s_n\}$ and $T=\{t_1,\dots, t_n\}$ there exist disjoint paths $P_1,\dots, P_n$ in $G-D$ joining every $s_i$ to $t_{\pi(i)}$ for some $\pi\in S_n$.
\end{lemma}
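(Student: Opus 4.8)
The plan is to reduce the statement to the Menger-type characterization of connectivity already recalled in the introduction, applied to the graph $G-D$ rather than to $G$ itself. First I would observe that deleting the $m$ vertices of $D$ from a $k$-connected graph cannot drop the connectivity by more than $m$: if $C\subseteq V(G-D)$ were a set with $|C|<k-m$ such that $(G-D)-C$ is disconnected, then $C\cup D$ would be a separator of $G$ of size less than $k$, contradicting $k$-connectivity. The only remaining possibility is that $(G-D)-C$ reduces to a single vertex, which forces $|V(G)|=m+|C|+1$; since a $k$-connected graph has at least $k+1$ vertices, this already gives $|C|\ge k-m\ge n$. Either way, $\kappa(G-D)\ge k-m\ge n$, so $G-D$ is $n$-connected.

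Next, since $S$ and $T$ are disjoint from $D$, both $n$-tuples lie entirely in $V(G-D)$. Applying the reformulation of Menger's theorem (a graph is $n$-connected if and only if for every pair of $n$-tuples there are disjoint paths realizing some pairing) to the $n$-connected graph $G-D$ with the tuples $S$ and $T$ yields disjoint paths $P_1,\dots,P_n$ inside $G-D$ joining each $s_i$ to $t_{\pi(i)}$ for some $\pi\in S_n$, which is exactly the conclusion. (Alternatively, one could adjoin an apex vertex joined to all of $S$ and another joined to all of $T$ and invoke the vertex-disjoint form of Menger directly, but the reduction above keeps us within the machinery stated in the introduction.)

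I expect there to be no genuine obstacle here — the lemma is, as its statement advertises, a direct corollary. The only point deserving a moment's care is the degenerate case in the first paragraph, when removing a small separator from $G-D$ leaves a single vertex; this is disposed of by the fact that $k$-connectivity entails at least $k+1$ vertices, after which the argument is a one-line invocation of Menger's theorem.
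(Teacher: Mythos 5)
Your proposal is correct and follows the same route as the paper, whose entire proof is the one line ``Use Menger's Theorem on sets $S$ and $T$ in the graph $G-D$''; you simply make explicit the connectivity estimate $\kappa(G-D)\ge k-m\ge n$ that the paper leaves implicit.
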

\begin{proof}
Use Menger's Theorem on sets $S$ and $T$ in the graph $G-D$.
\end{proof}
We first settle the case when $a$ or $b$ is equal to 1. Note that being 1-linked is equivalent to connectivity.

\begin{lemma}\label{b=1}
Let $G$ be $k$-linked, $H$ be connected. Then $G\square H$ is $k$-linked as well.
\end{lemma}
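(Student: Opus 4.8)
The plan is to prove this by induction on $|V(H)|$. Since a graph containing a $k$-linked spanning subgraph is itself $k$-linked, and since $G\square T\subseteq G\square H$ for any spanning tree $T$ of $H$, one may assume $H$ is a tree; the induction then removes a leaf at each step. The base case $|V(H)|=1$ is trivial, since $G\square H\cong G$.

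For the inductive step, let $\ell$ be a leaf of $H$ with unique neighbour $p$, put $H'=H-\ell$, so that $G\square H'$ is $k$-linked by the inductive hypothesis, and observe that $G\square H$ is obtained from $G\square H'$ by attaching a disjoint copy $G_\ell\cong G$ joined to the layer $G_p$ by the perfect matching $\{(u,\ell)(u,p):u\in V(G)\}$. Let $S,T$ be disjoint terminals, i.e.\ $2k$ distinct vertices, and let $A$ be the set of terminals lying in $G_\ell$; the vertices of $A$ have pairwise distinct $G$-coordinates, which form a set $C\subseteq V(G)$ with $|C|=|A|$. The strategy is to pull each terminal of $A$ across the matching into $G_p$, so that the instance becomes one living entirely inside $G\square H'$; we solve that instance by induction and then prepend the matching edges (and the short detour paths described below) to the paths obtained. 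If $A=\varnothing$ we are done at once; and if $|A|=2k$ then all terminals lie in $G_\ell$, so $G_p$ contains no terminal and every terminal can be pulled straight down the matching onto $2k$ distinct vertices of the single layer $G_p$, finishing by induction.

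The only real difficulty is that for a terminal $a\in A$ sitting at $(c,\ell)$ the target vertex $(c,p)$ may already be a terminal; call such an $a$ \emph{blocked}, and let $m$ be their number. Since the blocking terminals all lie in the single layer $G_p$ and at most $2k-|A|$ terminals lie outside $G_\ell$, we get $m\le 2k-|A|$, and a short inclusion--exclusion count shows $V(G)$ has at least $m$ \emph{fresh} columns, meaning columns in neither $C$ nor the set of occupied columns of $G_p$. Delete from $G_\ell$ the at most $|A|-m$ unblocked columns of $C$; what remains is $(2k-1-(|A|-m))$-connected, which is at least $m$ precisely because we are in the case $|A|\le 2k-1$. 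Hence by Menger's theorem $G_\ell$ contains $m$ vertex-disjoint paths joining the blocked columns to $m$ fresh columns and meeting $A$ only at those blocked columns. Routing each blocked terminal along such a detour and then across the matching, and each unblocked terminal straight across, relocates every terminal of $A$ to a distinct, previously unoccupied vertex of $G_p$; together with the untouched terminals outside $G_\ell$ this is a legitimate $k$-pair instance in $G\square H'$, which the inductive hypothesis solves, and reattaching the detours and matching edges yields the required paths in $G\square H$. The main obstacle is exactly this rerouting step: one must verify that the counting always leaves enough room --- in particular when $|V(G)|$ is as small as $2k$ --- and that the detour paths, the matching edges, and the paths returned by the induction are pairwise disjoint. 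The key point making the Menger step work is that $G$, being $k$-linked, is $(2k-1)$-connected.
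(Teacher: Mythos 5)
Your proof is correct, but it follows a genuinely different route from the paper. The paper's argument is a one-shot reduction to a single layer: it picks a $G$-layer $G_x$ containing at least one terminal, applies Lemma \ref{classic} in the whole product $G\square H$ (with the terminals already in $G_x$ playing the role of the forbidden set $D$) to route the remaining $2k-t$ terminals into $G_x$, truncates each path at its first entry into $G_x$, and then finishes with a single application of $k$-linkedness inside that layer. You instead reduce $H$ to a spanning tree, induct on $|V(H)|$ by peeling off a leaf layer $G_\ell$, and push the terminals of $G_\ell$ across the leaf-to-parent matching, using Menger inside the copy $G_\ell$ (which is $(2k-1)$-connected) to detour the ``blocked'' terminals to fresh columns; your counting of fresh columns via $|C\cup B|\le 2k-m$ and the connectivity bookkeeping after deleting the unblocked columns are both right, including in the tight case $|V(G)|=2k$. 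The trade-off: the paper's proof is shorter and treats arbitrary connected $H$ directly, but it silently relies on the product being $2k$-connected so that Lemma \ref{classic} applies with $m+n=2k$ (this follows from the product-connectivity results cited in the introduction); your proof avoids any fact about connectivity of the product, using only $(2k-1)$-connectivity of $G$ and the tree structure of $H$, at the cost of a longer case analysis and more delicate disjointness bookkeeping, which you do carry out correctly.
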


\begin{proof} Let $M$ denote the set of $2k$ (arbitrarily chosen and paired) terminals in $G\square H$. Take a $G$-layer $G_x$ $(x\in H)$ with terminals $u_1,\dots,u_t$ ($1\leq t \leq 2k$). If $t=2k$, use the condition that $G_x$ is $k$-linked and find the necessary paths within the layer. Otherwise, let $D= \{u_1,   \dots, u_t\}$,  $S= M-D$ and let $T$ consist of $(2k-t)$ non-terminal vertices in $G_x$. Using Lemma \ref{classic} one can find $(2k-t)$ paths from $S$ to $T$. For such a path $P$, starting at terminal $t$ in $S$, let $t'$ denote the first vertex of $P$ in $G_x$ (the vertex where $P$ first "enters" $G_x$). Truncate $P$ to a $t \-- t' $ path for every choice of $t$. Using the condition that $G_x$ is $k$-linked, one can find $k$ paths $Q_1,\dots, Q_k$ that join the $2k$ vertices of the set $D\cup \{t': t\in S\}$, with the obvious matching ($t'$ gets the original pair of $t$). The path $Q_1,\dots, Q_k$ extended by paths $P_1,\dots, P_{2k-t}$ at vertices ${t': t\in S}$ form an appropritate path system for the initial matching.  
\end{proof}

From now on, we may assume $a\geq b\geq 2$. We prove a more general form of Theorem \ref{a+b-1}:
\begin{theorem}\label{strong}
If $G$ is an $a$-linked graph with $|G|\geq 8a$ and $H$ is a $(2b-1)$-connected graph with $a\geq b $ then $G\square H$ is $(a+b-1)$-linked.
\end{theorem}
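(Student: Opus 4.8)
The plan is to induct on $b$. Lemma \ref{b=1} handles $b=1$ (a $1$-connected, i.e.\ connected, factor), so assume $b\geq 2$ and that the statement holds for smaller second parameter. Fix disjoint terminal tuples $S=(s_1,\dots,s_{a+b-1})$ and $T=(t_1,\dots,t_{a+b-1})$ in $G\square H$, a total of $2(a+b-1)$ vertices. The key structural idea is to pick one $G$-layer, call it $G_x$ for a well-chosen $x\in V(H)$, that will absorb ``enough'' of the linking problem: inside $G_x$ we have an $a$-linked graph, so it can handle up to $a$ pairs simultaneously, and the remaining $\leq b-1$ pairs will be routed through the rest of $G\square H$ using the $(2b-1)$-connectivity of $H$ together with the inductive hypothesis. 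Concretely, I would first project all $2(a+b-1)$ terminals down to their $H$-coordinates; if many terminals already share a common $G$-layer we are in good shape, and otherwise we use Menger-type arguments (Lemma \ref{classic}) in $H$ to reroute the ``vertical parts'' of prospective paths so that they funnel into $G_x$. The hypothesis $|G|\geq 8a$ guarantees that $G_x$ has enough non-terminal vertices to serve as landing spots for these rerouted paths (this is where the $8a$ bound is spent, just as in the proof of Lemma \ref{b=1}), and the hypothesis $a\geq b$ ensures that the number of pairs we try to push into $G_x$ never exceeds its linkedness $a$.

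In more detail, the steps I would carry out are as follows. \textbf{Step 1: choose the special layer.} Among the $G$-layers, select $G_x$ so that, writing $m$ for the number of terminals lying in $G_x$, the ``overflow'' $a+b-1-\lceil m/2\rceil$ of pairs not fully contained in $G_x$ is at most $b-1$; a counting/pigeonhole argument over the $|V(H)|$ layers, using $|V(H)|$ large (guaranteed since $H$ is $(2b-1)$-connected, hence has at least $2b$ vertices, and we may also invoke $|G|\geq 8a$ for the grid being large) shows such an $x$ exists, or else reduces to the case handled below. \textbf{Step 2: reserve landing vertices.} For each pair $\{s_i,t_i\}$ not contained in $G_x$, reserve in $G_x$ a fresh non-terminal vertex $s_i'$ (resp.\ $t_i'$) to act as the entry point of the path from $s_i$ (resp.\ $t_i$) into the layer $G_x$; there are at most $2(b-1)\leq 2a$ such vertices, comfortably available by $|G|\geq 8a$. \textbf{Step 3: route into the layer.} Using $(2b-1)$-connectivity of $H$ and Lemma \ref{classic}, build the vertical-plus-horizontal ``feeder'' paths connecting each external terminal to its reserved entry point in $G_x$, keeping these feeders internally disjoint and disjoint from $G_x$ except at their endpoints; the at most $\leq b-1$ pairs that must traverse outside $G_x$ and reconnect are handled by the inductive hypothesis applied to the subproduct obtained after deleting $G_x$ (or by a direct Menger argument in the remaining $(2b-3)$-connected graph), pairing $s_i'$ with $t_i'$ through that region. \textbf{Step 4: finish inside the layer.} Now $G_x$ is an $a$-linked graph in which we must link the $\leq a$ pairs consisting of (i) the genuine terminals of $G_x$ and (ii) the reserved entry points; apply $a$-linkedness of $G_x$ and splice the resulting paths with the feeders from Step 3.

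The main obstacle I anticipate is Step 3: making the feeder paths, the inductive linking in $G\square H - G_x$, and the internal linking in $G_x$ all simultaneously vertex-disjoint where required. The feeders occupy vertices in many $G$-layers, the inductive paths also live in $G\square H - G_x$, and both must avoid the $\leq 2a$ reserved vertices except at the designated endpoints; controlling this interference is the delicate part, and it is presumably why the theorem needs the clean bound $|G|\geq 8a$ rather than something smaller. A secondary subtlety is the degenerate case where no good layer $G_x$ exists because the terminals are too spread out among the $H$-coordinates --- here one instead uses $(2b-1)$-connectivity of $H$ more heavily to first consolidate terminals, or argues directly with a Menger-style rerouting in $H$, which is the route I would take if the pigeonhole in Step 1 fails. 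I expect the $a\geq b$ hypothesis to be used exactly to guarantee that the pair-count handed to $G_x$'s $a$-linkedness never exceeds $a$, and the base case $b=1$ of the induction is precisely Lemma \ref{b=1}.
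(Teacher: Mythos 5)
Your Step 1 counting does not work, and the case you set aside as a ``secondary subtlety'' is in fact the heart of the proof. To make the overflow $a+b-1-\lceil m/2\rceil$ at most $b-1$ you need a $G$-layer containing at least $2a-1$ terminals, and you moreover implicitly assume those $m$ terminals are matched \emph{within} the layer. Neither is guaranteed: the $2(a+b-1)$ terminals may lie in pairwise distinct $G$-layers, so no pigeonhole over the layers produces such a $G_x$; and even a layer holding many terminals may contain no matched pair at all, in which case the number of pairs not fully contained in $G_x$ is still $a+b-1$, which for $b\geq 2$ exceeds the linkedness $a$ of the single layer you reserve. A single $a$-linked layer cannot absorb all $a+b-1$ pairs; this is exactly why the paper reserves \emph{two} terminal-free $G$-layers $G_\alpha$ and $G_\beta$ and splits the pairs between them (at most $a$ per layer, which is where $a\geq b$ is spent), after a ``global horizontal shift'' inside the $G$-layers (Lemma \ref{classic} with a truncation step, processed layer by layer so that at most $2a-1$ vertices are ever in play) followed by a ``global vertical shift'' through the $(2b-1)$-connected $H$-layers. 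Your fallback of linking the leftover pairs by induction in the subproduct obtained by deleting $G_x$ also does not resolve the disjointness problem you yourself flag: the feeder paths into $G_x$ and the inductive linkage both live in $G\square H-G_x$, and you give no mechanism for keeping them apart.

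For comparison, the paper invokes the induction on $b$ only in the ``crowded layer'' case, when some $G$-layer carries more than $2a-1$ terminals: there a matched pair inside that layer is joined through an adjacent terminal-free layer $G_y$, every other terminal of the crowded layer is moved off by a single vertical edge, and the inductive hypothesis is applied to $G\square(H-x-y)$, with $H-x-y$ still $(2b-3)$-connected. So your skeleton (induction on $b$ with Lemma \ref{b=1} as the base case) agrees with the paper only superficially; the concrete routing scheme --- which pairs are joined in which reserved layer, and how the horizontal and vertical shifts enforce disjointness --- is missing precisely where the argument is hardest.
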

\begin{proof} 
Our main goal in the proof is to carry out one of the following tasks.

\begin{enumerate}
\item[i)]
Join one terminal to its pair within a layer and proceed by induction on an appropriate subgraph.
\item[ii)]
For every pair $(x,y)$ find paths $P_x$, $P_y$ with other endvertices $x'$ and $y'$,
such that $x'$ and $y'$ share the same horizontal layer. Following that we will find a path $Q$ joining $x'$ and $y'$ and join $x$ and $y$ by the concatenation $P_x \-- Q \-- P_y$.
\end{enumerate}
For the latter task, observe that, as the total number of terminals is $(2a+2b-2)$ and $a\geq b$, two approriate horizontal $G$-layers will be sufficient to contain and join all the $x'$-s and $y'$-s. 
The bottleneck of the idea is that all the $P_x$, $P_y$ paths have to be disjoint. We also want to make sure that these paths enter only one of the above distinguished horizontal layers containing the $x'$-s and $y'$-s. We will use Lemma \ref{classic} to guarantee such conditions. 
We call a $G$-layer {\it crowded} if it contains more than $(2a-1)$ terminals. Observe that crowded G-layers necessarily contain at least one pair of matching terminals.

If there exists a crowded $G$-layer $G_x$ $(x\in H)$ in $G\square H$, take a pair $u_1,v_1\in G_x$. As $|\Gamma_H(x)|\geq (2b-1)$, there exists $y\in \Gamma_H(x)$ such that $G_y$ contains no terminal. The appropriate neighbours of $u_1$ and $v_1$ 
in $G_y$ can be joined by a path within $G_y$. We can join $u_1$ and $v_1$ by extending that path on both ends by the vertical edges from $u_1$ and from $v_1$ to $G_y$. For every remaining terminal $u$ of $G_x$ we find a vertical neighbour not belonging to $G_y$ as follows (note that case i) and case ii) do not exclude each other).

\begin{itemize}
\item[i)] Link $u$ to its pair if they are adjacent by a vertical edge.
\item[ii)] If the terminal $u$ has a vertical neighbour $u'$ that is neither a terminal nor has it been previously assigned as a vertical neighbour to another terminal in $G_x$, choose $u'$.
\item[iii)] If neither of the previous cases applies, then $H_{u}$ contains all terminals lying outside of $G_x$ and its pair $v$ lies in $G_x$. Switch $(u_1,v_1)$ to $(u_2,v_2)$ and start again. The second round terminates without encountering the same problem.  
\end{itemize}
Define a new pairing of the remaining $(a+b-2)$ pairs of terminals by substituting every $u$ by $u'$. Observe, that $G$ and $H-x-y$ are $a$-linked and $(b-1)$-linked and have at least $8a$ and $(8b-2)$ vertices, respectively. By inductional hypothesis, $G\square (H-x-y)$ is $(a+b-2)$ linked and so there exist $(a+b-2)$ paths joining the newly defined $(a+b-2)$ pairs. The extension of these paths by the appropriate $\overline{uu'}$ edges results in a path system that joins the original pairing.  

Assume now that $G\square H$ contains no crowded $G$-layer. For a terminal $u$ our first goal is to find a path with horizontal edges to a vertex $u'\in G_u$ such that $H_{u'}$ is devoid of terminals and vertices of previously routed paths of the same kind. We carry out this task in several rounds, defining a $u'$ vertex and a corresponding $u-u'$ path for every  $u$ terminal of a given $G$-layer within a round.  
As long as the number of terminals on layers being or having been processed does not exceed $(2a-1)$, Lemma \ref{classic} provides an easy way of assignment. As Menger's theorem itself does not provide any control on the length of the joining paths, our proof will frequently use the following {\it truncation} operation. Assume we are given a path $P$ of horizontal edges with a terminal end $u$ and a non-terminal endvertex $\hat{u}$, whose $H_{\hat{u}}$ layer does not contain terminals or vertices of previously defined paths. Starting with $u$, we read the vertices of $P$ in precedence order until we find the first vertex $u'$ that has the same properties as $\hat{u}$. We stop and truncate $P$ to an $u \-- u'$ path. 

Consider all $G$-layers $G_1,\dots, G_n$ containing $0<s_1\leq\dots\leq s_n<2a$ terminals. Choose $1\leq t\leq n$ such that $\sum\limits_{i=1}^{t-1}s_i\leq (2a-1)$ and $\sum\limits_{i=1}^{t}s_i> (2a-1)$. We design our algorithm as follows:
\begin{enumerate}
\item[i)] In round 1, choose a set of $s_1$ vertices in $G_1$ whose corrensponding $H$-layers do not contain any terminal. Use Menger's theorem to find $s_1$ disjoint paths between the terminals of $G_1$ and the set. Truncate these paths and define the set $D$ as the set of the non-terminal endpoints of the truncated paths. 
\item[ii)] I round $i$ for $2\leq i\leq (t-1)$, let $T$ denote the set of terminals in $G_i$ and let $D_i$ be the projection of $D_{i-1}$ to $G_i$. Choose a set $S$ of $s_i$ vertices in $G_i$ whose corrensponding $H$-layers do not contain any terminal or vertex of $D_i$.  Easy to see that $|D_{i}|=\sum\limits_{j=1}^{i}s_j\leq (2a-1)$, hence the conditions of Lemma \ref{classic} hold. Take $s_i$ paths joining (in some order) $S$ and $T$. Truncate the paths and update $D_i$ by adding the set of the paths's non-terminal endpoints. 

\item[iii)] In the remaining $(n-t+1)$ rounds ($t\leq i \leq n$), choose a set of $s_i$ vertices in $G_j$ whose corrensponding $H$-layers do not contain any terminal. Use Menger's Theorem to find $s_i$ disjoint paths joining (in some order) the terminals and the newly chosen vertices. 
\end{enumerate}

We refer to the previous phase as a {\it global horizontal shift}. Observe that each terminal $u$ was given a non-terminal vertex $u'\in G_u$ and an $uu'$ path $P_{uu'}$ of horizontal edges, such that:

\begin{enumerate}
\item[A)] $P_{uu'}$ does not intersect with other paths defined in the phase.
\item[B)] $H_{u'}$ consist of at most $(n-t+1)$ vertices belonging to other paths defined in the phase (at most one at each layer during the last $(n-t+1)$ steps). 
\end{enumerate}

Note that the condition $V(G)\geq 8a$ guarantees that every step of the horizontal shift can be carried out without running out of space. Our next goal is to carry out a {\it global vertical shift}. We take
two $G$-layers that contain neither terminals nor vertices belonging to paths of the previous phase and call them $G_\alpha$ and $G_\beta$. If no such layers are available, let $G_\alpha=G_1$, $G_\beta=G_2$ and skip Round 1 and 2 in the previous pahse. Note that $|V(H)|\geq 2b$, hence neither of $G_1$ and $G_2$ contains more than $a$ terminals. For each $u'$ of the previous phase we define a vertex $u''$ and a $u'-u''$ path in $H_{u'}$ such that:
\begin{enumerate}
\item[i)] $u''\in G_\alpha$ or $u''\in G_\beta$,
\item[ii)] if $(u,v)$ are a pair, then $u''$ and $v''$ belong to the same $G$-layer,
\item[iii)] $G_\alpha$ and $G_\beta$ both have at most $a$ pairs of $(u'',v'')$ vertices, 
\item [iv)] the path $P_{u'u''}$ does not intersect other paths of the recent or the previous phase (with the exception of $P_{uu'}$). In addition, if $u''\in G_\alpha$, then $P_{u'u''}\cap G_\beta =\emptyset$, if $u''\in G_\beta$, then $P_{u'u''}\cap G_\alpha =\emptyset$. 
\end{enumerate}

Clearly, $G_\alpha$ and $G_\beta$ will provide room for the final step of joining the terminals. As both layers are $a$-linked, all $(u'',v'')$ pairs can be joined by disjoint path. Our initial pair $(u,v)$ will be joined by an $u\-- u'\--u''\--v''\--v'\-- v$ path. It remains to show that the $P_{u'u'}$ can be found with the above conditions. Distribute the $(u,v)$ terminal pairs among $G_\alpha$ and $G_\beta$ an arbitrary, balanced way (the layers receive $\lfloor\frac{a+b-1}{2}\rfloor$ and $\lceil\frac{a+b-1}{2}\rceil$ terminals). For given $u'$ an $u''$ vertices, we may assume, without loss of generality, that $u''\in G_\alpha$. The underlying $H_{u'}$-layer is $(2b-1)$-connected. It contains at most $(n-t+1)$ vertices of horizontal paths and the projection of $u'$ to $G_\beta$. If $(n-t+2)\leq (2b-2)$, we can find a $P_{u'u''}$ path that contains none of the listed vertices.   

If  $(n-t+2)>(2b-2)$, then $s_1=\dots=s_n=1$ or $s_1=\dots=s_{n-1}=1, s_n=2$. These rather simple cases can be handled by very simple case-by case analysis. Choose an empty $H$ layer for every pair of terminals. As each $G$-layer is $(2a-1)$-connected, and there are $(a+b-1)$ pairs of terminals, we can set a path between a terminal $u$ and the assigned  $u'$ endpoint within $G_u$ without entering the other assigned $H$-layers. We join $(u,v)$ by an $(u\--u'\--v'\-- v)$ path. We leave the detailed analysis as an exercise for the reader.  
\end{proof}

We briefly mention that our method with somewhat rougher estimates yields the following variant of Theorem \ref{a+b-1}. This bound is sharp apart from a small ($\leq 6$) constant term for in infinite class of graphs.
 
\begin{theorem}
Suppose $G$ is $a$-linked, $k$-connected graph, $H$ is a $h$-connected graph ( $h\leq k$, $G$ and $H$ are sufficiently large) then $G\square H$ is
 $\frac{a}{2a+1}(k+h)$-linked.
\end{theorem}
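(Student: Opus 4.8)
The plan is to mimic the architecture of the proof of Theorem~\ref{strong}, replacing the linkedness hypotheses of the two factors by connectivity hypotheses and tracking how many pairs can be routed simultaneously. Write $n = \frac{a}{2a+1}(k+h)$ for the target. As before, we are given $2n$ terminals in $G\square H$ and we try to realize the linkage in two stages: a \emph{global horizontal shift} that moves every terminal $u$ along horizontal edges to a vertex $u'\in G_u$ whose $H$-layer is clean, and a \emph{global vertical shift} that moves each $u'$ along its $H$-layer into one of two designated ``target'' $G$-layers $G_\alpha,G_\beta$, arranging that matched terminals land in the same target layer. The final join is then performed inside the target layers. The reason connectivity alone suffices for the final join is that $G$ is not only $k$-connected but $a$-linked, so each $G_\alpha,G_\beta$ can absorb up to $a$ pairs; since there are $n$ pairs in all, two target layers suffice precisely when $n\le 2a$, which is implied by $n=\frac{a}{2a+1}(k+h)$ once $k+h$ is not absurdly large relative to $a$ — and in any case one simply uses $\lceil n/a\rceil$ target layers at the cost of worse constants, which is why the statement only claims sharpness up to an additive constant.

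First I would handle the horizontal shift. A $G$-layer is declared \emph{crowded} if it holds more than $k-1$ terminals, so a crowded layer contains a matched pair; using that $H$ is $h$-connected we find a clean $H$-neighbour layer, join one pair through it, re-pair the remaining terminals through their vertical neighbours exactly as in the three-case analysis of Theorem~\ref{strong} (link-if-adjacent / take an unused vertical neighbour / switch the distinguished pair), and recurse on $G\square(H-x-y)$, which is still $a$-linked and $(h-2)$-connected (or one just decrements $h$ by $1$ if only one layer is removed), with the induction hypothesis supplying the linkage of the smaller product. When no $G$-layer is crowded, order the nontrivial $G$-layers $G_1,\dots,G_m$ by their terminal counts $s_1\le\cdots\le s_m$, choose the threshold index $t$ with $\sum_{i<t}s_i\le k-1<\sum_{i\le t}s_i$, and run the same three-phase routine: for $i<t$ use Lemma~\ref{classic} (the accumulated ``already-placed'' set $D_i$ has size $\sum_{j\le i}s_j\le k-1$, so $|D_i|+s_i\le k$ and Menger applies in each $G_i$), then truncate; for $i\ge t$ the layer $G_i$ sees no previously-placed vertices in its own column so plain Menger inside $G_i$ suffices. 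The size condition on $G$ (of the form $|V(G)|\gtrsim$ const$\cdot a$, absorbed into ``sufficiently large'') guarantees we never run out of clean columns.

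Next the vertical shift: pick two $G$-layers $G_\alpha,G_\beta$ that are clean of terminals and of horizontal-shift paths (or fall back to $G_1,G_2$ and skip their rounds above, as in Theorem~\ref{strong}), distribute the $n$ pairs in a balanced way between the two, and for each $u'$ route a path in $H_{u'}$ to $G_\alpha$ or $G_\beta$. The layer $H_{u'}$ is $h$-connected; it must avoid the $O(m-t)$ vertices of other horizontal paths crossing it plus the single projection of $u'$ into the other target layer, so as long as $(m-t)+2\le h-1$ such a path exists. The genuinely annoying residue is when $(m-t)+2>h-1$, which as in Theorem~\ref{strong} forces $s_1=\cdots=s_m=1$ (or one $s_m=2$); here every column is essentially free, so one assigns a clean $H$-layer to each pair, uses $(2a-1)$-connectivity (equivalently $k\ge 2a-1$, else the bound $n\le a$ is trivial) of each $G$-layer to route $u$ to $u'$ inside $G_u$ avoiding the other assigned layers, and joins $(u,v)$ by $u\,\text{--}\,u'\,\text{--}\,v'\,\text{--}\,v$. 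Finally, inside each of $G_\alpha,G_\beta$ the at-most-$a$ pairs $(u'',v'')$ get linked by disjoint paths using that $G$ is $a$-linked, and concatenation $u\,\text{--}\,u'\,\text{--}\,u''\,\text{--}\,v''\,\text{--}\,v'\,\text{--}\,v$ finishes each pair.

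The main obstacle is bookkeeping the numerology so that the claimed coefficient $\frac{a}{2a+1}$ actually comes out, rather than some nearby fraction: the crowding threshold ($k-1$ terminals per $G$-layer), the requirement $(m-t)+2\le h-1$ for the vertical shift, and the capacity $2a$ of the two target layers together pin down how large $n$ may be, and the $2a+1$ in the denominator is exactly what reconciles ``two layers, $a$ pairs each'' against ``$2n$ terminals spread over layers each holding $<k$ of them''; making all three constraints simultaneously consistent, while only losing an additive $O(1)$, is where the ``rougher estimates'' alluded to in the paper are spent. Everything else is a direct, if tedious, transcription of the proof of Theorem~\ref{strong} with ``$a$-linked'' weakened to ``$k$-connected'' wherever only a Menger-type routing (not a prescribed pairing) is needed. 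I would leave the explicit constant-chasing and the small-case analysis to the reader, exactly as the paper does for Theorem~\ref{strong}.
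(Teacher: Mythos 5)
Your architecture (crowded-layer induction, horizontal shift via Lemma~\ref{classic}, vertical shift into clean target $G$-layers, final linkage inside those layers using $a$-linkedness of $G$) is indeed the paper's, but the one step that actually produces the coefficient $\frac{a}{2a+1}$ is the step you get wrong, and it is the only step that carries the content of this theorem. You organize the vertical shift around \emph{two} target layers $G_\alpha,G_\beta$ of total capacity $2a$ pairs and assert that the denominator $2a+1$ is ``exactly what reconciles two layers, $a$ pairs each'' with the distribution of the $2n$ terminals. With $L:=\frac{a}{2a+1}(k+h)$ pairs, two target layers suffice only when $L\le 2a$, i.e.\ $k+h\le 4a+2$; in the main regime of the theorem ($k+h$ large relative to $a$) the two-layer scheme simply cannot absorb the pairs. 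Your aside that one could instead use $\lceil L/a\rceil$ target layers ``at the cost of worse constants'' has it backwards: that multi-layer allocation \emph{is} the paper's proof, and it is what yields exactly the stated constant. The paper allocates $t=\lceil L/a\rceil$ empty $G$-layers, each receiving at most $a$ pairs (this is where $a$-linkedness of $G$ is spent); each vertical path then runs in an $h$-connected $H$-layer and must dodge the traces of the other $t-1$ target layers plus the horizontal-shift debris, while the horizontal phase and the crowding threshold (about $k$ terminals per $G$-layer, so that a crowded layer contains a matched pair by pigeonhole since $L<k$) consume the $k$-connectivity of the $G$-layers via Lemma~\ref{classic}. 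The budget is therefore roughly $2L+\frac{L}{a}\le k+h$, i.e.\ $L\le\frac{a}{2a+1}(k+h)$; the $2a+1$ comes from ``$2L$ plus $L/a$,'' not from a capacity of $2a$. Correspondingly, your vertical feasibility condition ``$(m-t)+2\le h-1$'' is miscalibrated: with $\lceil L/a\rceil$ target layers the quantity to avoid is about $\lceil L/a\rceil-1$ plus the horizontal leftovers, and it is this count that must be charged against $h$.

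Since you explicitly defer ``the numerology so that the claimed coefficient actually comes out'' to the reader, and the accounting you do sketch for that coefficient is inconsistent with the statement being proved, the proposal as written does not establish the theorem; the fix is to replace the two-layer setup by the $\lceil L/a\rceil$-layer allocation and redo the avoidance counts as above. A smaller inaccuracy: the parenthetical ``else the bound $n\le a$ is trivial'' when $k<2a-1$ is false in general (for instance $a=4$, $k=h=6$ gives a target of $5>a$), though it is also unnecessary, because every $G$-layer is $(2a-1)$-connected simply by virtue of $G$ being $a$-linked.
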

\begin{proof} We copy the proof of Theorem \ref{a+b-1}. Let us denote  $L:=\frac{a}{2a+1}(k+h)$. If there exists a crowded $G$-layer (containing at least $(k+1)$ terminals), find a matching pair of terminals (which exists by piegon-hole principal), join them, empty the layer as before and proceed by induction. Otherwise, global shift horizontally, allocate $t:=\lceil\frac{L}{a}\rceil$ empty $G$-layers $G_{\alpha_1},\dots,G_{\alpha_t}$, distribute the terminal pairs among them via vertical paths and reduce the problem to linking within horizontal layers. 
\end{proof}

We believe that the statement of Theorem \ref{a+b-1} is true even without the indicated condition on the minimal size of the graphs (we only assume the condition to guarantee enough room for the shifting techniques). Nevertheless, in the case when $\frac{link(G)}{v(G)}>\frac{1}{8}$ the shifting techniques presented in the main proof fail to work as one has to deal with an aboundance of terminals congested on the layers. Linking of the terminals in that case is likely to lead a rather lenghty and tedious case-by-case analysis involving ac hoc solutions which we do not find particularly interesting.  

\section*{Proof of Theorem \ref{k->k+1}}

Assume we are given the pairing of $(2k+2)$ terminals in $G\square H$. We use the technique of the proof of Theorem \ref{a+b-1} and follow a case-by-case analysis.
\begin{enumerate}
\item If there exist a $G$-layer $G_i$ with $ 3\leq s_i \leq k$ elements, then no $G$-layer is crowded (no $G$-layer contains $2k$ or more terminals). Choose $G_\alpha=G_i$ and apply the horizontal and vertial shift techniques on the remaining $(2k+2)-s_i\leq (2k-1)$ terminals. Observe that $n=(t-1)$, that is, one can use Lemma \ref{classic} in every $G$-layer during the horizontal shift. In the vertival phase the $H$-layer is 2-connected and the path joining $u'$ and $u''$ only has one vertex to avoid (corresponding to $G_\alpha$ or $G_\beta$).

\item If there exist $G$-layers $G_i$ and $G_j$ such that $s_i=1$, $s_j=2$ or $s_i=s_j=2$, choose $G_\alpha=G_i$ and $G_\alpha=G_j$. Solution for the previous case works here as well.
\item If $s_1=\dots=s_{2k+2}=1$, use the separate technique presented for small cases at the end of proof of Theorem \ref{a+b-1}.
\item If $s_1=\dots=s_{n-1}=1$, $k+1\leq s_n\leq 2k-1$, choose $\{G_\alpha, G_\beta\} =\{G_1, G_2\}$ and apply the shifting technique. Lemma \ref{classic} handles every $G$-layer just as in Case 1.  

\item If $s_1=s_2=1$, $s_3=2k$, join a pair $u_1$, $v_1$ within $G_2$ using Lemma \ref{classic} and shift the remaining terminals vertically. If a terminal $u_2$ has no available neighbour, then $v_2\in G_2$ and we can switch pair $(u_1,v_1)$ to $(u_2,v_2)$ and repeat the argument, just as in the crowded layer case of the proof of Theorem \ref{a+b-1}.
\item If $s_1=2$, $s_2=2k$, similar technique works as in Case 5.
\item If $s_n\geq2k+1$, we have all terminals (or all with one exeption) on the same $G_x$-layer for some $x\in H$. Let $y,z\in \Gamma_H(x)$. We can distribute the pairs of terminals between $G_y$ and $G_z$ by using appropriate vertical $\overline{xy}$ and $\overline{xz}$ edges and join $u'$, $v'$ endpoints within the horizontal layer. If $s_n=2k+1$, the missing terminal can be routed to the appropriate layer. We leave the details as an exercise.  
  
\item If $s_1=s_2=k+1$, we may assume none of the layers contain a pair, otherwise we can proceed by matching a pair within a layer, allocating new terminal vertex $u'$ instead of the original terminal $u$ on the layer, shifting and using induction as previously. Let $G_1=G_x$, $G_2=G_y$ for some $x,y\in H$ and let $z\in \Gamma_H(x)-\{y\}$ (as $H$ is 2-connected, such $z$ has to exist). Shift terminals horizontally within $G_x$ if necessarily in order to get for every terminal $u$ a $uu'$ path with endpoint $u'$, such that $H_{u'}$ contains neither a terminal nor a vertex belonging to the shifting paths (in case there was no shift necessary, let $u'=u$). We pick a single terminal $u \in G_x$ and take a path $u-u'-u''$ where $u''$ denotes the projection of $u'$ to $G_y$.  We connect $u''$ with the pair of $u$ in $G_y$ using Lemma \ref{classic}. For the remaining $2k$ pairs, we set a vertical paths for each terminal in $G_x$ and $G_y$ to $G_z$. For a terminal $w\in G_x$ there is no obstacle in $H_w$ to find a path to its projection to $G_z$. If $w\in G_y$, we use the fact the $H$ is 2-connected and that it contains at most one vertex of $G_x$ we might have used previously. Having set the vertical paths, we join the projections in $G_z$.

\end{enumerate} 

Assuming that the product $G\square H$ is $k$-linked yields no essential lower bound neither on linkedness nor on connectivity of $G$ or $H$. The theorem of Bollob\'as and Thomason \cite{Bollobas} together with the result of Spacapan \cite{conn}  show that large degree is sufficient to imply high linkedness while the component graphs are connected but might not even be 2-connected. That is, there exist a function f such that $\delta(G)+\delta(H)\geq f(k)$ implies $G\square H$ is $k$-linked. Using the improved bound presented in \cite{10k} we know that $f(k)\leq 10k$ and detailed analysis might yield even better bounds.

\section*{Linkedness of Hypercubes, Products of Paths and Products of Cycles}
We determine the linkedness-number of the $n$-dimensional grid, that is, the Cartesian product of $n$ paths, and the linkedness of product of  $n$ cycles. We use a straightforward corollary of Theorem \ref{k->k+1}:

\begin{corollary}\label{cycle}
If $G$ is a $k$-linked graph, $k\geq 2$, $|G|\geq \max(9, 4k)$ then $G\square C_m$ is $(k+1)$-linked, where $C_m$ denotes the cycle of length $m$.  
\end{corollary}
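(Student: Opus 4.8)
The plan is to derive Corollary~\ref{cycle} directly from Theorem~\ref{k->k+1} by checking that the cycle $C_m$ satisfies the hypothesis imposed on the second factor. Theorem~\ref{k->k+1} requires $G$ to be $k$-linked with $k\geq 2$ and $|G|\geq\max(9,4k)$ — which is exactly what we are assuming — and requires $H$ to be $2$-connected. So the only thing to verify is that $C_m$ is $2$-connected.

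First I would note that this is essentially immediate: for any $m\geq 3$ the cycle $C_m$ is $2$-connected, since removing a single vertex leaves a path on $m-1\geq 2$ vertices, which is still connected, and $C_m$ itself has at least $3$ vertices. Hence $\kappa(C_m)\geq 2$, so $C_m$ is $2$-connected. One small point worth addressing is the degenerate cases: if one adopts the convention that ``cycle'' includes $C_1$ or $C_2$, these are not $2$-connected, so I would either restrict to $m\geq 3$ or simply remark that $C_m$ denotes a cycle in the usual sense with $m\geq 3$. Given that, the statement follows by applying Theorem~\ref{k->k+1} with $H=C_m$.

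There is no real obstacle here — the corollary is a pure specialization. The one thing to be slightly careful about is making sure the size condition on $G$ is stated consistently with Theorem~\ref{k->k+1} (both ask for $|G|\geq\max(9,4k)$ and $k\geq 2$, so they match verbatim), and that no hidden hypothesis on $|V(H)|$ is lurking in Theorem~\ref{k->k+1}; reading the statement, there is none, so nothing further is needed. Thus the proof is a single sentence invoking $2$-connectivity of $C_m$ and then Theorem~\ref{k->k+1}.

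\begin{proof}
For $m\geq 3$ the cycle $C_m$ is $2$-connected: it has at least three vertices, and the removal of any single vertex yields a path on $m-1\geq 2$ vertices, which is connected. Hence $\kappa(C_m)\geq 2$. Applying Theorem~\ref{k->k+1} with $H=C_m$ — whose hypotheses ($G$ is $k$-linked, $k\geq 2$, $|G|\geq\max(9,4k)$, and $H$ is $2$-connected) are all satisfied — we conclude that $G\square C_m$ is $(k+1)$-linked.
\end{proof}
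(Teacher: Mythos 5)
Your proposal is correct and matches the paper's intent exactly: the paper presents this as a direct specialization of Theorem \ref{k->k+1}, relying implicitly on the fact that $C_m$ ($m\geq 3$) is $2$-connected, which is precisely what you verify. Nothing further is needed.
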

\begin{lemma}\label{C_m x C_n}
For cycles of length $m$ and $n$ ($m,n\geq 3$) $link (C_m\square C_n)=2$. 
\end{lemma}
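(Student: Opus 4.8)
The plan is to establish the two inequalities $link(C_m \square C_n) \geq 2$ and $link(C_m \square C_n) \leq 2$ separately. The upper bound should be the easy direction: since $C_m \square C_n$ is $4$-regular, it is at most $4$-connected, and a $3$-linked graph must be $5$-connected (recall $k$-linked implies $(2k-1)$-connected), so $link(C_m \square C_n) \leq 2$. In fact one can be even more direct: place two terminals $s_1, t_1$ adjacent to a common vertex $w$ and the remaining terminals of a hypothetical $3$-linking on the other two neighbours of $w$ together with a fourth vertex, so that any $s_1\--t_1$ path is forced through $w$, blocked by the disjointness requirement — this exhibits a concrete obstruction. I would just cite the connectivity/linkedness inequality from the introduction to keep this short.

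For the lower bound $link(C_m \square C_n) \geq 2$, I would like to invoke Corollary \ref{cycle}, which says $G \square C_m$ is $(k+1)$-linked whenever $G$ is $k$-linked with $|G| \geq \max(9, 4k)$. Taking $k = 1$ would give $2$-linkedness of $C_n \square C_m$ as long as $C_n$ is $1$-linked (i.e. connected, which it is) and $|C_n| \geq 9$. However, Corollary \ref{cycle} and the theorem behind it are stated with the hypothesis $k \geq 2$, so this does not immediately cover $k = 1$, nor does it cover small cycles $C_n$ with $n < 9$. So the real work is: (a) check that the proof technique of Theorem \ref{k->k+1} still goes through for $k=1$ — here the ``$k$-linked layer'' is just a connected cycle $C_n$, and $2$-linking two pairs in $C_m \square C_n$ is a genuinely hand-checkable situation; and (b) handle the finitely many small cases $3 \le m, n \le 8$ directly.

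The cleanest route for the lower bound is probably a direct argument rather than forcing the corollary to apply. Given two disjoint pairs $\{s_1, t_1\}$ and $\{s_2, t_2\}$ in $C_m \square C_n$, I would first note that $C_m \square C_n$ is vertex-transitive and $4$-connected (for $m, n \geq 3$), then argue by cases on how the four terminals distribute among the $C_n$-layers (the ``columns''). If at least one layer contains a full pair, join that pair inside its layer — but a cycle is not $2$-linked, so one must instead route that pair through an adjacent empty-ish layer exactly as in the crowded-layer argument of Theorem \ref{strong}: take a neighbouring layer with no terminals (exists since there are at most $4$ terminals and, when $n$ is small, one checks directly), link the projected pair there, and then the remaining pair lives in $C_m \square C_{n}$ minus two columns, which is $C_m \square P_{n-2}$ (a grid-cylinder), in which $1$-linking the last pair is trivial by connectivity. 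If the terminals are spread with at most one per layer, a global horizontal shift followed by a vertical shift into two designated empty layers (as in Theorem \ref{strong}, Case analysis of Theorem \ref{k->k+1}) reduces to linking within two $C_n$-layers — each of which can host one pair and join it trivially.

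\textbf{Main obstacle.} The hard part will be the bookkeeping for small cycles, where ``there is an empty adjacent layer'' can fail: with $m = 3$ one has only three $C_n$-layers, and four terminals may occupy all of them. In that regime the shifting arguments have no room, and one must exhibit explicit $2$-linkings in the small toroidal grids $C_3 \square C_3, C_3 \square C_4, \dots, C_3 \square C_n$ by ad hoc path constructions, using $4$-connectivity and Menger to get two disjoint paths with the right endpoints and then checking (or swapping, via vertex-transitivity and the symmetry group of the torus) that they can be made to respect the prescribed pairing. This is routine but the only genuinely case-heavy step; I would state it as a finite check and spell out one or two representative configurations.
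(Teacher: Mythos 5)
Your upper bound is fine (and is essentially what the paper leaves implicit: $C_m\square C_n$ is $4$-regular, hence at most $4$-connected, while a $3$-linked graph must be $5$-connected). The lower bound, however, has genuine gaps as written. The central one is the small-case step, which is where all the real content of the lemma sits: your plan of getting two disjoint paths from $4$-connectivity/Menger and then ``swapping, via vertex-transitivity and the symmetry group of the torus'' does not produce the \emph{prescribed} pairing — Menger only links $S$ to $T$ under some permutation, and the automorphism group of the torus cannot reorder four arbitrarily placed terminals — so this is not a finite check you have reduced to, it is the whole difficulty restated. Moreover, in your ``main obstacle'' paragraph the list of configurations to verify is $C_3\square C_3, C_3\square C_4,\dots, C_3\square C_n$, i.e. it grows with $n$; in fact for $m=3$ and $n\geq 5$ there is always a terminal-free layer in the $C_n$ direction (only four terminals), so the genuinely cramped cases are only the finitely many with both cycle lengths small — but your proposal never pins this down. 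A smaller issue: after routing a co-layer pair through an adjacent empty layer, your claim that ``the remaining pair lives in $C_m\square C_n$ minus two columns'' fails when all four terminals occupy a single column; there you first need the extra step from the crowded-layer argument of Theorem \ref{strong}, shifting the second pair out of that column before deleting it.

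The device you are missing is the paper's shortening argument, which makes the whole problem collapse to three explicit base cases. If $\max(m,n)\geq 5$, one of the cycles has a layer containing no terminal; delete that layer and join its two neighbouring layers directly by edges, which turns $C_m\square C_n$ into $C_{m-1}\square C_n$ (or $C_m\square C_{n-1}$). A $2$-linkage in the shortened product lifts back, rerouting through the deleted layer where the new edges were used. Iterating, everything reduces to $C_3\square C_3$, $C_3\square C_4$ and $C_4\square C_4$, which the paper (like you) settles by a finite case analysis. Incorporating this reduction would remove the unbounded family from your obstacle paragraph and shrink your base-case burden from ``all small tori'' to these three graphs; your shifting-based argument for the spread-out configurations then becomes unnecessary.
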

\begin{proof}
It can be showed by a simple but rather lengthy case-by-case analysis that $C_3\square C_3$, $C_3\square C_4$ and $C_4\square C_4$ are 2-linked. If $\max(m,n)\geq 5$, one of the cycles can be shortened by substituting an empty layer with vertical / horizontal edges joining its neighbours and proceed by induction. 
\end{proof}
\begin{proposition}
 For cycles of length $m_1,\dots,m_t$ ($m_i\geq 3$, $t\geq 2$) $link( C_{m_1}\square\dots\square C_{m_t})=t$.  
\end{proposition}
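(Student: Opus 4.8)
The plan is to prove the statement by induction on $t$, using Lemma~\ref{C_m x C_n} as the base case $t=2$. For the inductive step, I would set $G = C_{m_1}\square\dots\square C_{m_{t-1}}$ and $H = C_{m_t}$. By the inductive hypothesis, $\operatorname{link}(G) = t-1$, so in particular $G$ is $(t-1)$-linked. To invoke Corollary~\ref{cycle} and conclude that $G\square C_{m_t}$ is $t$-linked, I need two things: that $t-1 \geq 2$ (i.e.\ $t\geq 3$, which is exactly the case not covered by the base case) and that $|V(G)| \geq \max(9, 4(t-1))$. The size bound is the first routine point to check: since $|V(G)| = m_1\cdots m_{t-1} \geq 3^{t-1}$ and $3^{t-1}$ dominates both $9$ and $4(t-1)$ for all $t\geq 3$, this holds; I would state this explicitly rather than grinding through it. This gives $\operatorname{link}(C_{m_1}\square\dots\square C_{m_t}) \geq t$.

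For the matching upper bound $\operatorname{link}(C_{m_1}\square\dots\square C_{m_t}) \leq t$, I would argue via connectivity: a $k$-linked graph is $(2k-1)$-connected, so it suffices to show the product has connectivity at most $2t-1$, equivalently that it is \emph{not} $2t$-connected. Each vertex of $C_{m_1}\square\dots\square C_{m_t}$ has degree exactly $2t$ (each cycle factor contributes $2$ to the degree), so $\delta = 2t$ and hence $\kappa \leq 2t < 2t+1$; thus the graph cannot be $(t+1)$-linked, since a $(t+1)$-linked graph would need connectivity $\geq 2t+1$. Combining the two bounds yields $\operatorname{link} = t$.

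The main obstacle is really the base case $t=2$, which is delegated to Lemma~\ref{C_m x C_n}; its proof in turn relies on verifying $2$-linkedness of the three small toroidal grids $C_3\square C_3$, $C_3\square C_4$, $C_4\square C_4$ by ad hoc case analysis, together with a reduction that shortens any cycle of length $\geq 5$ by contracting an empty layer. Beyond that, the inductive step is essentially bookkeeping: the only genuine content is checking that the hypotheses of Corollary~\ref{cycle} are met at each stage, and the degree computation for the upper bound. I would also remark that the argument is robust to the order in which the factors are peeled off, since the Cartesian product is associative and commutative up to isomorphism, so no care is needed about \emph{which} cycle plays the role of $H$.
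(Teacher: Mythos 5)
Your proof is correct and takes essentially the same route as the paper, which simply asserts that the proposition ``follows directly from Corollary~\ref{cycle} and Lemma~\ref{C_m x C_n}'': your induction on $t$, with the size check $3^{t-1}\geq\max(9,4(t-1))$, is exactly the intended application of those two results. Your degree argument ($\kappa\leq\delta=2t<2t+1$, so the product cannot be $(t+1)$-linked) neatly supplies the upper-bound half of the equality, which the paper leaves implicit.
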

It follows directly from Corollary \ref{cycle} and Lemma \ref{C_m x C_n}.
\begin{proposition}
Let $Q_n$ denote the $n$-dimensional hypercube.  $link(Q_n)=\lceil\frac{n}{2}\rceil$ if $n\neq 3$.
\end{proposition}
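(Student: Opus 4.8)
The plan is to pin $link(Q_n)$ between matching bounds, the lower one coming from an induction on $n$ that proceeds in steps of two via the identity $Q_n=Q_{n-2}\square Q_2=Q_{n-2}\square C_4$.

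For the upper bound I would use that $Q_n$ is $n$-regular and that $\kappa(Q_n)=n$ (immediate from the formula of Spacapan \cite{conn} applied to $Q_n=Q_{n-1}\square K_2$, which gives $\kappa(Q_n)=\min\{n,\,2\kappa(Q_{n-1}),\,2^{n-1}\}$, so $\kappa(Q_n)=n$ for all $n\ge1$ by induction). Since a $k$-linked graph is $(2k-1)$-connected, any $k$ for which $Q_n$ is $k$-linked satisfies $2k-1\le n$, i.e. $k\le\lceil n/2\rceil$, so $link(Q_n)\le\lceil n/2\rceil$.

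For the lower bound I would first dispose of the tiny cases: $Q_1=K_2$ and $Q_2=C_4$ are connected, hence $1$-linked, and neither is $2$-linked, so $link(Q_1)=1=\lceil 1/2\rceil$ and $link(Q_2)=1=\lceil 2/2\rceil$; the value $n=3$ is genuinely exceptional because $Q_3$ is planar, so the standard crossing argument (four terminals in bad cyclic order on a face) forces $link(Q_3)=1<2$. The inductive step is: if $Q_m$ is $k$-linked with $k\ge2$ and $|V(Q_m)|=2^m\ge\max(9,4k)$, then, $C_4=Q_2$ being $2$-connected, Corollary \ref{cycle} (equivalently Theorem \ref{k->k+1}) applied to $Q_{m+2}=Q_m\square C_4$ shows $Q_{m+2}$ is $(k+1)$-linked. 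For every $m\ge4$ one checks $2^m\ge\max(9,4\lceil m/2\rceil)$, so the size hypothesis persists all the way along the even chain $Q_4,Q_6,Q_8,\dots$ and the odd chain $Q_5,Q_7,Q_9,\dots$, giving $link(Q_{m+2})\ge link(Q_m)+1$ at each step. Hence, once $link(Q_4)\ge2$ and $link(Q_5)\ge3$ are in hand, the induction delivers $link(Q_n)\ge\lceil n/2\rceil$ for all even $n\ge4$ and all odd $n\ge5$, and together with the upper bound and the cases $n\in\{1,2\}$ this gives $link(Q_n)=\lceil n/2\rceil$ for every $n\ne3$.

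What remains, and what I expect to be the technical heart, is verifying the two base cases $link(Q_4)\ge2$ and $link(Q_5)\ge3$. For $Q_4$ I would argue directly: view $Q_4=Q_3\square K_2$ as two copies $A,B$ of $Q_3$ joined by a perfect matching and split into cases by how the (at most) $4$ terminals distribute between $A$ and $B$, rerouting a pair through $B$ when both copies would otherwise be overloaded and using that $Q_3$ stays connected after the deletion of any two vertices. (Alternatively, $Q_4$ is $4$-connected and non-planar, since it is bipartite with $32>2\cdot16-4$ edges, hence $2$-linked by the classical fact that $4$-connected non-planar graphs are $2$-linked.) The real difficulty is $Q_5=Q_4\square K_2$, i.e. two copies $A,B$ of $Q_4$ joined by a perfect matching, with $3$ pairs of terminals: the delicate configuration is when all six terminals lie in one copy $A$, where one links two of the pairs by disjoint paths inside $B$ (available since $Q_4$ is $2$-linked by the previous step) and the third pair inside $A$ while avoiding the four already-used vertices; this forces a careful choice of which two pairs to push into $B$ so that the last pair is not separated in $Q_4$ by deleting those four vertices, a finite but fiddly check leaning on the $4$-connectivity and vertex-transitivity of $Q_4$. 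An alternative is to treat $Q_5=Q_3\square C_4$ by an ad hoc version of the case analysis in the proof of Theorem \ref{k->k+1}, where the four $Q_3$-layers provide extra maneuvering room; either way, the $Q_5$ verification is where essentially all the work sits.
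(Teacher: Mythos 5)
Your proposal is correct and follows essentially the same route as the paper: the upper bound from $n$-connectedness via the fact that $k$-linked graphs are $(2k-1)$-connected, the exceptional planarity argument for $Q_3$, and the lower bound by induction through $Q_n=Q_{n-2}\square C_4$ and Corollary \ref{cycle}, anchored at the base cases $Q_4$ and $Q_5$. The only differences are cosmetic: your justification that $Q_4$ is $2$-linked (4-connected and non-planar) is more explicit than the paper's bare assertion, while for $Q_5$ the paper supplies a dedicated lemma (case analysis on the distances $d(x_i,y_i)$ in the decomposition $Q_5=Q_4^0\cup Q_4^1$) where you only sketch the analogous two-copies-of-$Q_4$ argument.
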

As $Q_n$ is $n$-connected, the linkedness number of $Q_n$ is  at most $\lceil\frac{n}{2}\rceil$. 
Equality holds for $n=1$ and $2$. $Q_3$ is not 2-linked as being a planar graph with non-triangle faces. $Q_4$ is 2-linked.
\begin{lemma}
For the five dimensional hypercube $link(Q_5)=3$.
\end{lemma}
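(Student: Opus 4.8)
The upper bound is immediate: a $4$-linked graph is $7$-connected, whereas $Q_5$, being only $5$-connected, is not; hence $link(Q_5)\le 3$. So the real task is to show that $Q_5$ is $3$-linked, and the plan is to peel off the last coordinate: write $Q_5$ as the union of its two $4$-subcubes $A,B\cong Q_4$ together with the perfect matching $M$ joining them. The facts I would lean on are that $Q_4$ is $2$-linked (recorded above), that $Q_4$ is $4$-connected and vertex-transitive, that $Q_5$ is $5$-connected, and that hypercubes are triangle-free, so that adjacent vertices of $Q_4$ have no common neighbour and, consequently, every $4$-vertex separator of $Q_4$ is the neighbourhood of a single vertex.

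Given the three pairs $(s_i,t_i)$, I would split the analysis according to how the six terminals distribute between $A$ and $B$; up to swapping $A$ and $B$ the counts are $(6,0)$, $(5,1)$, $(4,2)$ or $(3,3)$. Whenever a pair sits entirely inside one subcube and the other subcube is not too full, I reroute that pair across $M$: run its two $M$-edges out of $A$ and join the two ports inside $B$ by a path dodging $B$'s few terminals ($Q_4$ is $4$-connected with only $16$ vertices, so there is room), occasionally prefixing a one-edge detour inside $A$ if a port happens to be occupied. After such reroutes at most two pairs remain, and I finish using $2$-linkedness of $A$ and of $B$. The structural input is needed only to choose which pairs to reroute: if pair $\ell$ cannot be linked inside $A$ once two other pairs are pushed out, their four pushed-out endpoints form a $4$-separator of $A$, hence the neighbourhood of an endpoint of pair $\ell$; but two of the three candidate failures would then force two adjacent vertices of $Q_4$ to share a common neighbour, which is impossible. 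So an admissible choice always exists, and $(6,0)$, $(5,1)$, $(4,2)$ all go through. (In $(4,2)$ one usually need not reroute at all: two pairs fit inside $A$ by $2$-linkedness and the remaining pair inside $B$.)

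The genuinely delicate configuration is $(3,3)$ with no whole pair inside either subcube, i.e.\ every pair is split, say $s_i\in A$ and $t_i\in B$. Now all three pairs must cross $M$, yet a single $Q_4$ can only realise a prescribed $2$-matching. I would therefore route one pair, say pair $3$, first — cross at $s_3$ and run a path to $t_3$ through a lightly used region of $B$ — and then connect the remaining two pairs by choosing crossing ports $y_1,y_2$ so that the only prescribed matchings ever required are $\{(s_1,y_1),(s_2,y_2)\}$ inside $A$ and $\{(y_1',t_1),(y_2',t_2)\}$ inside $B$ (the primes denoting $M$-images), never a $3$-matching; the large supply of free vertices ($Q_4$ has sixteen) makes the needed ports available while keeping all paths disjoint.

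I expect this last configuration to be the main obstacle: it is exactly where Theorem \ref{k->k+1} cannot be applied, because $K_2$ is not $2$-connected and a crossing edge leaves no slack to route around a blocked port, so one is forced into the bookkeeping above — keeping the ports and the connecting paths of distinct pairs disjoint across the two subcubes while never demanding a $3$-linkage in a single $Q_4$. As with the small-case discussions earlier in the paper, the residual checking (when a port coincides with a terminal, or when a rerouted pair degenerates to a single $M$-edge) is routine and I would leave it to the reader.
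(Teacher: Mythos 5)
Your upper bound and your overall strategy---splitting $Q_5$ into two $Q_4$-layers joined by a perfect matching and leaning on $2$-linkedness of $Q_4$---are in the same spirit as the paper's proof. The problem is the configuration you yourself flag as delicate, and there the argument has a genuine gap. Because you fix the splitting coordinate in advance, your ``all pairs split'' case covers \emph{every} placement with $s_i\in A$, $t_i\in B$, not just antipodal pairs; and your plan for it asks $Q_4$ for more than it can deliver. After routing pair $3$ (crossing at $s_3$ and running a path $s_3'\to t_3$ inside $B$), you need disjoint paths in $A$ linking $(s_1,y_1),(s_2,y_2)$ while \emph{avoiding} $s_3$, and disjoint paths in $B$ linking $(y_1',t_1),(y_2',t_2)$ while \emph{avoiding the entire $s_3'$--$t_3$ path}. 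Two-linkedness of $Q_4$ prescribes the pairing of four endpoints but gives no control whatsoever over additional forbidden vertices; and since $Q_4$ is not $3$-linked and is only $4$-connected, there is no automatic slack for dodging a routed path of several vertices. ``$Q_4$ has sixteen vertices'' is not an argument here; this is exactly the step that requires the case analysis, and it is left unproved. (Your separator trick handles a $1$-linkage avoiding four prescribed vertices, but nothing analogous is offered for a $2$-linkage avoiding a path.)

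Two further remarks. First, the inference ``hypercubes are triangle-free, consequently every $4$-vertex separator of $Q_4$ is the neighbourhood of a single vertex'' is a non sequitur: the conclusion is true (it is the super-connectedness of $Q_4$), but it does not follow from triangle-freeness and needs its own short proof or a reference, since your choice of which pair to keep inside $A$ rests on it. Second, the gap can be shrunk the way the paper shrinks it: choose the splitting coordinate \emph{adaptively}. If some pair is at distance at most $4$, split along a coordinate in which that pair agrees; that pair then lies inside one $Q_4$-layer, it can be joined there, and the remaining two pairs are pushed into the other layer and finished by $2$-linkedness---which is precisely the paper's Case 1. The residual case is then only the one in which all three pairs are antipodal, a finite check (the paper reduces it to five arrangements up to isomorphism). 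As written, your fixed decomposition funnels most of the difficulty into exactly the case you do not prove.
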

\begin{proof} We distinguish two cases:
\begin{description}
\item{Case 1} Assume there exist terminals $x_1,y_1$ satisfying $d(x_1,y_1)\leq 4$. Because of symmetries of the graph $Q_5$, we may assume without loss of generality that $x_1=(0,0,0,0,0)$ and $y_1=(\underline{v},0)$, $\underline{v}\in Q_4$. Also, let us denote $Q_5=Q_4^0\cup Q_4^1$, the decomposition of $Q_5$ into affine hyperplanes being isomorphic to $Q_4$ (with respect to the last coordinate). Certainly, $x_1,y_1\in Q_4^0$ and we may assume that $Q_4^0-x_2-y_2-x_3-y_3$ is connected (otherwise switch to pair $(x_2,y_2)$). Join $x_1$ to $y_1$ in $Q_4$ by any path of length 4 encountering no other terminal. We want to join the remaining two pairs in $Q_4^1$. If a terminals $u\in \{x_2, y_2, x_3, y_3\}$ lies in $Q_4^0$, we define a crossing path that ends at $u'\in Q_4^1$. If the projection of $u$ to $Q_4^1$ is not a terminal vertex (of if it happens to be  the pair of $u$), we take that very edge as the required path. In every other case there is a $v\in Gamma_{Q_4^0}(u)$ such that the projection of $v$ to $Q_4^1$ is available, yielding an appropriate path of length 2.

\item{Case 2} If $d(x_1,y_1)=d(x_2,y_2)=d(x_3,y_3)=5$, there exist - up to isomorphism - 5 possible arrangements of the terminals. We leave the easy case-by-case analysis to the reader. 
\end{description}
\end{proof}
As $Q_n=Q_{n-2}\square C_4$ , Corollary \ref{cycle} applies (for $n\geq 4$) and so the proof is complete.
\begin{proposition}
Let $P_m$ denote the path of $m$ vertices and let $G= P_{m_1}\square\dots\square P_{m_t}$. Then we have
\begin{enumerate}
\item[i)] $link(G)=1$ if $t=3$, $m_1=m_2=2$ and
\item[ii)] $link(G)=\lceil\frac{t}{2}\rceil$ if $t\neq 3$, $m_i\geq 2$ or $t=3$, $m_3\geq m_2\geq 3$.
\end{enumerate}
\end{proposition}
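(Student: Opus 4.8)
The plan is to combine the hypercube-style argument with Corollary \ref{cycle} and to treat the genuinely small cases by hand. First I would observe that $P_m\square P_n$ contains $Q_2=C_4$ as a spanning subgraph only in the trivial case, so I cannot invoke Corollary \ref{cycle} directly; instead the strategy is: once we can show $link(P_{m_1}\square P_{m_2})=1$ whenever both paths have at least two vertices (this is immediate, since a product of two paths is connected but contains a vertex of degree $2$ and hence cannot be $2$-linked — place terminals around a $4$-cycle corner), we feed this into Corollary \ref{cycle}'s analogue for paths. The key point is that $P_4 = C_4$ minus an edge is not available, but $P_m$ for $m\ge 2$ is $1$-connected, and more importantly: if $G$ is $k$-linked with $|G|\ge\max(9,4k)$ and $m\ge 2$, then $G\square P_m$ is $(k+1)$-linked provided $P_m$ can play the role of the $2$-connected factor. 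Since $P_m$ is only $1$-connected, Theorem \ref{k->k+1} does not apply verbatim; the honest route is to note $P_m\square P_n$ for suitable sizes contains $C_4\square\cdots$ after we have accumulated enough factors, or to prove directly a path-analogue.

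So the cleaner plan is this. For $t\ge 4$ (or $t=3$ with all $m_i\ge 3$), I would peel off factors two at a time. The base case is that a product of two paths, each with $\ge 2$ vertices, is $1$-linked but not $2$-linked; the product of two paths each with $\ge 3$ vertices, or $P_2\square P_2\square P_m$ with $m\ge 2$ — wait, that is the excluded case. Let me restate: the induction is $link(P_{m_1}\square\cdots\square P_{m_t}) \ge link(P_{m_1}\square\cdots\square P_{m_{t-2}}) + 1$ as long as $P_{m_{t-1}}\square P_{m_t}$ contains a spanning-enough $2$-connected subgraph and the first factor is large enough. Concretely $P_{m_{t-1}}\square P_{m_t}$ with $m_{t-1},m_t\ge 2$ is $2$-connected whenever $m_{t-1},m_t\ge 2$ and not both equal to... no: $P_2\square P_2=C_4$ is $2$-connected; $P_2\square P_m$ for $m\ge 2$ is $2$-connected (it is a ladder); so in fact $P_m\square P_n$ is $2$-connected for all $m,n\ge 2$. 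Hence grouping the $t$ factors into pairs, each pair a $2$-connected graph $H_j$, I apply Theorem \ref{k->k+1} repeatedly: $P_{m_1}\square\cdots\square P_{m_t} = (P_{m_1}\square P_{m_2})\square(P_{m_3}\square P_{m_4})\square\cdots$, and each such $H_j$ is $2$-connected. Starting from $link=1$ after the first pair and adding $1$ per subsequent pair (checking the size condition $|G|\ge\max(9,4k)$, which holds since the product grows at least geometrically once $t$ is moderate), I get $link = \lceil t/2\rceil$ for even $t$, and for odd $t\ge 5$ I absorb the leftover path $P_{m_t}$ using Corollary \ref{cycle}'s method or simply note $\lceil t/2\rceil = \lceil (t-1)/2\rceil + \ldots$ — here I must be careful, as an odd factor adds a $1$-connected, not $2$-connected, piece, which is exactly why $t=3$ with $m_1=m_2=2$ degenerates: $P_2\square P_2\square P_{m_3}$ is the prism over $P_{m_3}$, still has degree-$2$ behavior? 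No — $P_2\square P_2 = C_4$ is $2$-connected, so $C_4\square P_{m_3}$... has minimum degree $3$; yet the claim says its linkedness is $1$. I would need to exhibit a terminal configuration showing it is not $2$-linked despite being $3$-regular-ish; the obstruction is that $C_4\square P_2 = Q_3$ has $link=1$ (stated in the paper), and $C_4\square P_{m_3}$ retracts onto $Q_3$-like behavior at the end.

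For the upper bounds: $link(G)\le\lceil\kappa(G)/?\rceil$ is not quite the tool; rather $k$-linked implies $(2k-1)$-connected, so $link(G)\le\lceil(\kappa(G)+1)/2\rceil$. For $G=P_{m_1}\square\cdots\square P_{m_t}$ with all $m_i\ge 3$ (so each factor has a degree-$2$ vertex but the product's connectivity is governed by $\min$ of sums of $\delta$'s), $\delta(G)=t$, and Spacapan's formula / the degree bound gives $\kappa(G)=t$ when all $m_i\ge 2$ (since $\kappa(P_{m})\cdot|V(\text{rest})| \ge \delta$ typically), hence $link(G)\le\lceil(t+1)/2\rceil$; to sharpen to $\lceil t/2\rceil$ I would use that $G$ is bipartite with non-triangular structure, or directly that $G$ contains no $K_4$ subdivision suitably placed — actually the precise upper bound $\lceil t/2\rceil$ follows because a $k$-linked graph on $n$ vertices with $k\ge$ something must have minimum degree $\ge 2k-1$? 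No: $k$-linked $\Rightarrow$ $(2k-1)$-connected $\Rightarrow$ $\delta\ge 2k-1$. With $\delta(G)=t$ we get $2k-1\le t$, i.e. $k\le(t+1)/2$, so $k\le\lceil t/2\rceil$ since $k$ is an integer — that is exactly it, clean.

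The hard part will be the degenerate case $t=3,\ m_1=m_2=2$ (showing $link=1$, i.e. that $C_4\square P_{m_3}$ is \emph{not} $2$-linked even though its minimum degree is $3$), and verifying the small base cases $P_2\square P_2$, $P_2\square P_m$, and $P_3\square P_3$ have the claimed linkedness $1$ by explicit bad terminal placements; these require locating a $4$-cycle or a $Q_3$-copy and placing $s_1,t_1,s_2,t_2$ cyclically around it so that any $s_1t_1$-path must cross any $s_2t_2$-path. I would record these as routine but slightly tedious checks. The inductive ascent via Theorem \ref{k->k+1} is then essentially mechanical once the size bound $|G|\ge\max(9,4k)$ is confirmed, which fails only for the tiniest products and is subsumed in the base-case list.
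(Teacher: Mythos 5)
Your upper bound is fine: the corner vertex gives $\delta(G)=t$, and since $k$-linked graphs are $(2k-1)$-connected this forces $k\leq\lceil\frac{t}{2}\rceil$; likewise your plan to rule out $2$-linkedness of $C_4\square P_{m_3}$ by a bad terminal placement on a quadrilateral face of a planar embedding is the right obstruction for case i). The lower bound, however, has a genuine gap. Your pairing induction (``start from $link=1$ after the first pair and add $1$ per subsequent pair'') cannot get off the ground: Theorem \ref{k->k+1} explicitly requires $k\geq 2$, so it cannot upgrade the merely $1$-linked grid $P_{m_1}\square P_{m_2}$ to a $2$-linked four-factor product, and neither Theorem \ref{a+b-1} nor Theorem \ref{strong} yields $2$-linkedness from a $1$-linked factor and a $2$-connected factor (they return only $1$). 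So the very first increment, which for $m_i=2$ is exactly the statement that $Q_4$ is $2$-linked, must be supplied by hand, and you never supply it. The problem recurs, worse, for odd $t$: after pairing, the leftover path factor is only $1$-connected, so Lemma \ref{b=1} merely preserves linkedness and your count gives $(t-1)/2$, one short of $\lceil\frac{t}{2}\rceil=(t+1)/2$; the suggestion to ``absorb the leftover path using Corollary \ref{cycle}'s method'' fails because that corollary needs a cycle, i.e.\ a $2$-connected factor. For $t=5$ with all $m_i=2$ the missing increment is precisely $link(Q_5)=3$, and for $t=3$, $m_3\geq m_2\geq 3$ it is the $2$-linkedness of $P_2\square P_3\square P_3$ --- neither of which follows from the product theorems; these are exactly the hand-checked base cases on which the paper's argument rests, and your proposal omits them entirely (your listed base cases are all two-dimensional).

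This is where your route and the paper's diverge. The paper does not pair up the path factors: it first establishes $link(Q_n)=\lceil\frac{n}{2}\rceil$ for $n\neq 3$ (base cases $Q_4$ and $Q_5$ treated directly, then $Q_n=Q_{n-2}\square C_4$ together with Corollary \ref{cycle}), and then, inside the grid $G$, takes an induced hypercube $Q_t$ (respectively a copy of $P_2\square P_3\square P_3$ when $t=3$) containing some terminals, routes the remaining terminals into it using the $(t-p)$-connectivity of $G$ minus those terminals, and performs the linkage inside that subgraph. If you keep your pairing scheme you must still add the special bases ($Q_4$, $Q_5$, $P_2\square P_3\square P_3$) and replace the failed odd-$t$ step, at which point you have essentially reconstructed the paper's reduction.
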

\begin{proof}
The first statement is obvious as $G$ is a planar graph. For $t\neq 3$, let $Q_n$ be an induced subgraph of $G$ containing terminals $x_1,\dots,x_p$, $p\geq 1$. As $G-x_2-\dots-x_p$ is $t-p$-connected, the set of remaining terminals can be routed to $Q_n$ and linking can be performed. The case $t=3$, $m_3\geq m_2\geq 2$ can be solved by the previous idea using the fact that $P_2\square P_3\square P_3$ is 2-linked.
\end{proof}

\section*{Acknowledgement}
The author wishes to thank Professor Ervin Gy\H ori for his  helpful suggestions, interest and valuable guidance.  The author  is also grateful for the Balassi Institute, the Fulbright Commission and the Rosztoczy Foundation for their kind and generous support.


\begin{thebibliography}{9}
\bibitem{modern} B. Bollobas, {\it Modern Graph Theory}, Springer Graduate Texts in Mathematics, Volume 184 (1998)

\bibitem{Bollobas} B. Bollobas, C. Thomason, {\it Highly Linked Graphs},  Combinatorics, Probability, and Computing  Volume 16 (1993), 1-7.

\bibitem{vizingsurvey} B. Bre\v{s}ar, P. Dorbec, W. Goddard,B. L. Hartnell, M. A. Henning, S. Klav\v{z}ar,D. F. Rall, {\it Vizing's Conjecture: a Survey and Recent Results}, Journal of Graph Theory Volume 69, 46-76. 

\bibitem{conn ineq} W.S. Chiue, B.S. Shieh, {\it On Connectivity of the Cartesian Product of Two Graphs}, Applied Mathematical Computation Volume 102 (1999), 129-137.

%TO DO

\bibitem{Gyori Plummer} E. Gy\H ori, M. Plummer, {\it The Cartesian Product of a $k$-Extendable and an $l$-Extendable Graph is $(k+l+1)$-Extendable}, Discrete Mathematics, Volume 101 (1992), 87-96.

\bibitem{product} W. Imrich, S. Klav\v{z}ar, {\it Product Graphs: Structure and Recognition}, J. Wiley\& Sons, New York, (2000)

\bibitem{girth} K. Kawarabayshi, {\it k-Linked Graphs with Girth Condition}, Journal of Graph Theory, Volume 45 (2004), 48-50. 

\bibitem{Mader} W. Mader, {\it Topological Subgraphs in Graphs of Large Girth}, Combinatorica, Volume 18 (1998), 405-412. 

\bibitem{conn} S. \v{S}pacapan, {\it Connectivity of Cartesian Products of Graphs}, Applied Mathematics Letters 21 (2008), 682-685.

\bibitem{10k} R. Thomas and P. Wollan, {\it An Improved Linear Edge Bound for Graph Linkages},  European Journal of Combinatorics 26 (2005), 309-324.

%\bibitem{cop} R. To\v{s}i\'c, {\it The Search Number of the Cartesian Product of Graphs}, Univ. Novom Sabu Zb. Rad. Prirod.-Mat Fak. Ser. Mat.,
%Volume 17 (1987), 239-243. %TO DO

%\bibitem{vizing} V.G. Vizing, {\it The Cartesian Product of Graphs},
 %Vy\v{c}isl. Sistemy, Volume 9 (1963),  30-43. %TO DO

\end{thebibliography}
\end{document}